\numberwithin{equation}{section}
 \theoremstyle{plain}
\newtheorem{thm}[equation]{Theorem}
 \theoremstyle{plain}
\newtheorem{cond}[equation]{Condition}
 \theoremstyle{plain}
\theoremstyle{plain}
\theoremstyle{plain}
\theoremstyle{plain}
\theoremstyle{plain}
\theoremstyle{plain}
  \newtheorem{prop}[equation]{Proposition}
\theoremstyle{plain}
 \newtheorem{lemma}[equation]{Lemma}
\theoremstyle{plain}
\theoremstyle{plain}
\theoremstyle{plain}
\theoremstyle{definition}
  \newtheorem{defn}[equation]{Definition}
\theoremstyle{definition}
 \theoremstyle{definition}
\theoremstyle{remark}
\newtheorem{rmk}[equation]{Remark}
\newcommand{\Z}{\mathbb{Z}}
\newcommand{\Q}{\mathbb{Q}}
\newcommand{\R}{\mathbb{R}}
\newcommand{\C}{\mathbb{C}}
\newcommand{\F}{\mathbb{F}}
\newcommand{\PP}{\mathbb{P}}
\newcommand{\A}{\mathbb{A}}
\newcommand{\fp}{\mathfrak{p}}
\newcommand{\fq}{\mathfrak{q}}
\newcommand{\fm}{\mathfrak{m}}
\newcommand{\cF}{\mathcal{F}}
\newcommand{\cP}{\mathcal{P}}
\newcommand{\Hom}{\mathrm{Hom}}
\newcommand{\End}{\mathrm{End}}
\newcommand{\GL}{\mathrm{GL}}
\newcommand{\tld}[1]{\widetilde{#1}}
\newcommand{\defeq}{\stackrel{\textrm{\tiny{def}}}{=}}
\title{A note on mod-$p$ local-global compatibility via Scholze's functor}
\author{Kegang Liu, Zicheng Qian}
\address{Universit\'e Sorbonne
Paris Nord, LAGA, 99 avenue J.B. Cl\'ement, 93430, Villetaneuse, France}
\email{kegang.liu@math.univ-paris13.fr}
\address{Department of Mathematics, University of Toronto, 40 St. George Street, Toronto, ON M5S 2E4,
Canada
}
\email{zqian@math.toronto.edu}
\begin{document}

\maketitle
\tableofcontents

\section{Introduction}\label{sec: intro}
Let $L$ be a $p$-adic field and $\mathrm{Gal}_L$ be its absolute Galois group. In \cite{Sch18}, Scholze constructs a functor which sends an admissible $\Z_p$-representation $\pi$ of $\mathrm{GL}_n(L)$ to certain cohomology group $H^{i}_{\text{\'et}}(\PP_{\C_p}^{n-1}, \cF_\pi)$ (for $i\geq 0$) equipped with the action of $D^\times\times\mathrm{Gal}_L$ with $D$ the division algebra over $L$ with invariant $\frac{1}{n}$. Then Scholze proves a version of local-global compatibility by applying his functor to a space of $\Q_p/\Z_p$-automorphic forms on a certain inner form of $\GL_2$ over some totally real field. In \cite{Liu21}, Liu considers a space $\pi$ of $\Q_p/\Z_p$-automorphic forms (with arbitrary level at some $\fp\mid p$) on certain unitary similitude group over a totally real field $F^+$ which splits at $\fp$, and then relates it to the cohomology of certain Kottwitz-Harris-Taylor type Shimura varieties via Scholze's functor. Given a Hecke system $\fm$ which appears only in the middle degree of the cohomology of Shimura varieties and assume that it corresponds to an absolutely irreducible residual Galois representation $\overline{\sigma}_{\fm}$, Liu proves that $H^{n-1}_{\text{\'et}}(\PP_{\C_p}^{n-1}, \cF_{\pi_{\fm}})$ is $\sigma_{\fm}|_{\mathrm{Gal}_{F^+_{\fp}}}$-typic where $\sigma_{\fm}$ is the lift of $\overline{\sigma}_{\fm}$ established in \cite[Proposition~2.3]{Liu21}. Furthermore, Liu proves that $H^{n-1}_{\text{\'et}}(\PP_{\C_p}^{n-1}, \cF_{\pi_{\fm}[\fm]})$ is sufficient to determine the restriction $\overline{\sigma}_{\fm}|_{\mathrm{Gal}_{F^+_{\fp}}}$ assuming that
\begin{itemize}
\item the dual $\pi_{\fm}^\vee$ is flat as a module over the big Hecke algebra;
\item $\overline{\sigma}_{\fm}|_{\mathrm{Gal}_{F^+_{\fp}}}$ is semisimple and multiplicity free.
\end{itemize}
In this short note, we completely remove the semisimple assumption in Liu's result. Namely, assuming flatness of $\pi_{\fm}^\vee$ and $\overline{\sigma}_{\fm}|_{\mathrm{Gal}_{F^+_{\fp}}}$ being multiplicity free, we prove that $H^{n-1}_{\text{\'et}}(\PP_{\C_p}^{n-1}, \cF_{\pi_{\fm}[\fm]})$ determines $\overline{\sigma}_{\fm}|_{\mathrm{Gal}_{F^+_{\fp}}}$ uniquely. We give remarks on our assumptions at the end of this note.

\medskip

\textbf{Acknowledgements}.  \,
We would like to thank Florian Herzig for reading an early draft of this note and pointing out one mistake, which leads to further simplification of our proof. The second author would like to thank Yongquan Hu for his feedback and for mentioning an example in \cite{HW21} (see Remark~\ref{rmk: non flat}).
\section{Submodules of a typic module}\label{sec: submodule}
Let $G$ be a group, $R$ be a commutative ring and $\rho_0$ be an $R[G]$-module of finite length. We have the following definition generalizing \cite[Definition~5.2]{Sch18}.
\begin{defn}\label{def: general typic}
Assume that $\rho_0$ is multiplicity free, namely each Jordan--H\"older factor of $\rho_0$ appears with multiplicity one. Then $\rho_0$ admits a decomposition $\rho_0\cong \bigoplus \tld{\rho}$ into its non-zero indecomposable direct summands. We say that an $R[G]$-module $V$ is \emph{$\rho_0$-typic} if there exists a non-zero $R$-module $W_{\tld{\rho}}$ with trivial $G$-action for each $\tld{\rho}$, such that $V\cong \bigoplus_{\tld{\rho}}W_{\tld{\rho}}\otimes_R \tld{\rho}$.
\end{defn}
We assume throughout this section that $R$ is field and $\rho_0$ is multiplicity free. We fix from now a $\rho_0$-typic $R[G]$-module $V$ equipped with an $R$-module $W_{\tld{\rho}}$ for each indecomposable direct summand $\tld{\rho}$ of $\rho_0$ as in Definition~\ref{def: general typic}. The main result of this section is a criterion (see Proposition~\ref{prop: main criterion}) for certain submodule of $V$ to be $\rho_0$-typic.

We write $\Sigma$ for the set of non-zero indecomposable $R[G]$-submodules of $\rho_0$, equipped with the natural partial order given by inclusion of $R[G]$-submodules. We write $\mathrm{JH}_{R[G]}(\cdot)$ for the set of Jordan--H\"older factors. As $\rho_0$ is multiplicity free, any $R[G]$-submodule of $\rho_0$ is uniquely determined by its set of Jordan--H\"older factors, and we clearly have $\#\Sigma\leq 2^\ell$ where $\ell$ is the length of $\rho_0$. Note that $V\cong \bigoplus_{\tld{\rho}}W_{\tld{\rho}}\otimes_R \tld{\rho}$ forces $V$ to be locally finite, and so is any subquotient of $V$.

\begin{lemma}\label{lem: extend}
Let $\rho'\subseteq \rho$ be two elements of $\Sigma$. Then the induced map $\Hom_{R[G]}(\rho,V)\rightarrow\Hom_{R[G]}(\rho',V)$ is an isomorphism.
\end{lemma}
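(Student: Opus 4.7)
The strategy is to reduce the claim to a restriction between the endomorphism rings of two indecomposable multiplicity-free submodules of a single indecomposable direct summand of $\rho_0$. First, write $V \cong \bigoplus_{\tld{\rho}} W_{\tld{\rho}} \otimes_R \tld{\rho}$ as in the definition of typic, with $\tld{\rho}$ running over the indecomposable direct summands of $\rho_0$. Since $R$ is a field each $W_{\tld{\rho}}$ is free, and since $\rho$ is finitely generated (as a submodule of the finite-length $\rho_0$) the functor $\Hom_{R[G]}(\rho, -)$ commutes with the direct sum. This gives $\Hom_{R[G]}(\rho, V) \cong \bigoplus_{\tld{\rho}} W_{\tld{\rho}} \otimes_R \Hom_{R[G]}(\rho, \tld{\rho})$ and similarly for $\rho'$, with the restriction compatible with this decomposition. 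It thus suffices to prove $\Hom_{R[G]}(\rho, \tld{\rho}) \to \Hom_{R[G]}(\rho', \tld{\rho})$ is an isomorphism for each indecomposable summand $\tld{\rho}$ of $\rho_0$.

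Next, I would show that any $\rho \in \Sigma$ lies in a unique indecomposable summand $\tld{\rho}_i$ of $\rho_0$. Projecting $\rho$ onto each summand $\tld{\rho}_j$ and tracking Jordan--H\"older factors, multiplicity-freeness forces $\JH(\pi_j(\rho)) = \JH(\rho) \cap \JH(\tld{\rho}_j)$; a length count then gives $\rho = \bigoplus_j \pi_j(\rho)$ inside $\rho_0$, and indecomposability of $\rho$ forces all but one summand to vanish. Consequently $\rho, \rho' \subseteq \tld{\rho}_i$ for the same $i$, and for summands $\tld{\rho} \ne \tld{\rho}_i$ the Jordan--H\"older sets of $\rho$ and $\rho'$ are disjoint from those of $\tld{\rho}$, forcing both $\Hom_{R[G]}(\rho, \tld{\rho})$ and $\Hom_{R[G]}(\rho', \tld{\rho})$ to vanish.

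In the remaining case $\tld{\rho} = \tld{\rho}_i$, the image of any $f\colon \rho \to \tld{\rho}_i$ is a submodule of $\tld{\rho}_i$ whose JH-support is contained in $\JH(\rho)$; by multiplicity-freeness of $\tld{\rho}_i$, the unique such submodule is $\rho$ itself, so $\image(f) \subseteq \rho$ and $f$ factors through the inclusion $\rho \hookrightarrow \tld{\rho}_i$. This identifies $\Hom_{R[G]}(\rho, \tld{\rho}_i) \cong \End_{R[G]}(\rho)$ and similarly $\Hom_{R[G]}(\rho', \tld{\rho}_i) \cong \End_{R[G]}(\rho')$, and reduces the lemma to showing that the restriction $\End_{R[G]}(\rho) \to \End_{R[G]}(\rho')$, $\phi \mapsto \phi|_{\rho'}$, is an isomorphism (well-defined because $\phi$ preserves $\rho'$ by the same JH-uniqueness argument).

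The main obstacle will be this last bijectivity. Injectivity is a Fitting-type argument: if $\phi|_{\rho'} = 0$ but $\phi \ne 0$, then $\JH(\image\phi)$ and $\JH(\Ker\phi)$ partition $\JH(\rho)$ disjointly, giving $\rho = \image(\phi) \oplus \Ker(\phi)$; indecomposability of $\rho$ together with $\Ker(\phi) \supseteq \rho' \ne 0$ then forces $\image(\phi) = 0$, a contradiction. For surjectivity---the delicate point---the plan is to extend $g \in \End(\rho')$ to $\phi \in \End(\rho)$ by induction on the length of $\rho/\rho'$, reducing to the case where $\rho/\rho'$ is simple and exploiting the multiplicity-free structure of the ambient indecomposable $\tld{\rho}_i$ (together with the observation that, by the same Fitting argument, every non-zero endomorphism of an indecomposable multiplicity-free module is an isomorphism, so both endomorphism rings are division algebras) to build the desired extension.
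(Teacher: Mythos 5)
Your reduction coincides with the paper's: decompose $V\cong\bigoplus_{\tld{\rho}}W_{\tld{\rho}}\otimes_R\tld{\rho}$, place $\rho'\subseteq\rho$ inside a single indecomposable summand $\tld{\rho}$, use multiplicity-freeness to see that any map from $\rho$ (resp. $\rho'$) to $\tld{\rho}$ has image inside $\rho$ (resp. $\rho'$), and thereby reduce to the restriction map $\End_{R[G]}(\rho)\rightarrow\End_{R[G]}(\rho')$. Your Fitting-type injectivity argument is correct. But the surjectivity, which you yourself call the delicate point, is left as a plan rather than a proof: you never construct the extension of $g\in\End_{R[G]}(\rho')$ to $\rho$, and that is exactly where the content of the lemma sits. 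The paper closes this point by asserting that $\Hom_{R[G]}(\rho,\tld{\rho})$ and $\Hom_{R[G]}(\rho',\tld{\rho})$ are each one-dimensional over $R$, spanned by the inclusions, after which an injective map between one-dimensional spaces is trivially bijective.

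Moreover, the extension step cannot be completed in the generality you have set up. Let $\tau_1\not\cong\tau_2$ be simple $R[G]$-modules with $\End_{R[G]}(\tau_1)=R'$ a proper finite extension of $R$ and $\End_{R[G]}(\tau_2)=R$, and let $\rho$ be a non-split extension of $\tau_2$ by $\tau_1$, with $\rho'=\tau_1=\mathrm{soc}_{R[G]}(\rho)$. An endomorphism of $\rho$ restricting to $\alpha\in R'$ on $\tau_1$ and inducing $\beta\in R$ on $\tau_2$ exists only if $\alpha_*[\rho]=\beta^*[\rho]$ in $\Ext^1_{R[G]}(\tau_2,\tau_1)$, which is an $R'$-vector space; since $[\rho]\neq 0$ this forces $\alpha=\beta\in R$. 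Hence $\End_{R[G]}(\rho)=R\subsetneq R'=\End_{R[G]}(\rho')$, and with $\rho_0=V=\rho$ the restriction map of the lemma is injective but not surjective. Such configurations occur, e.g. $R=\F_p$, $G$ a local Galois group, $\tau_1$ induced from the unramified quadratic extension and $\tau_2$ trivial, with $\Ext^1\neq 0$ by the local Euler characteristic formula. So both your induction and the statement itself need the extra input that $\End_{R[G]}(\tau)=R$ for every Jordan--H\"older factor $\tau$ of $\rho_0$ (absolute irreducibility of the constituents, or a prior base change to $\overline{R}$). Granting that, your own Fitting argument shows $\End_{R[G]}(\rho)=R$ for every $\rho\in\Sigma$ (embed it into $\End$ of the socle, a product of copies of $R$), both Hom-spaces become one-dimensional, and the induction you sketch is unnecessary.
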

\begin{proof}
We first deduce from $\rho,\rho'\in\Sigma$ and $\rho'\subseteq \rho$ that there exists a unique indecomposable direct summand $\tld{\rho}\in\Sigma$ of $\rho_0$ which contains $\rho,\rho'$. The canonical map $\Hom_{R[G]}(\rho,\tld{\rho})\rightarrow\Hom_{R[G]}(\rho',\tld{\rho})$ is clearly an isomorphism of $R$-vector spaces of dimension one. Then the canonical map in question factors through the isomorphisms
$$\Hom_{R[G]}(\rho,V)\cong W_{\tld{\rho}}\otimes_{R}\Hom_{R[G]}(\rho,\tld{\rho})\xrightarrow{\sim} W_{\tld{\rho}}\otimes_{R}\Hom_{R[G]}(\rho',\tld{\rho})\cong \Hom_{R[G]}(\rho',V).$$
\end{proof}

\begin{lemma}\label{lem: irr cosocle}
Let $V'\subseteq V$ be an $R[G]$-submodule with $\mathrm{cosoc}_{R[G]}(V')$ being irreducible. Then there exists $\rho\in\Sigma$ such that $V'\cong\rho$.
\end{lemma}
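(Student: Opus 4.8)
The plan is to locate $V'$, up to isomorphism, inside a single indecomposable direct summand of $\rho_0$, and then to recognise it as a distinguished submodule of that summand.

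\emph{Reductions.} Put $S:=\cosoc_{R[G]}(V')$. Since $S$ is irreducible, $V'$ is indecomposable, and since $\cosoc_{R[G]}(\cdot)$ is right exact, every nonzero finite-length quotient of $V'$ has cosocle isomorphic to $S$. Fixing an $R$-basis of each $W_{\tld\rho}$ identifies $V\cong\bigoplus_{\lambda}V_\lambda$, where each $V_\lambda$ is a copy of one of the indecomposable direct summands $\tld\rho$ of $\rho_0$; write $\pi_\lambda\colon V\to V_\lambda$ for the projections. As $\rho_0$ is multiplicity free, the sets $\JH_{R[G]}(\tld\rho)$ are pairwise disjoint with union $\JH_{R[G]}(\rho_0)=\JH_{R[G]}(V)$, so $S$ lies in $\JH_{R[G]}(\tld\rho_0)$ for exactly one summand $\tld\rho_0$. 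If $V_\lambda$ is a copy of some $\tld\rho\neq\tld\rho_0$, then $\pi_\lambda(V')$ is a finite-length quotient of $V'$ contained in $V_\lambda$, so its cosocle would lie in $\JH_{R[G]}(\tld\rho)$, which does not contain $S$; hence $\pi_\lambda(V')=0$. Thus $V'$ is contained in the sub-sum of those $V_\lambda$ that are copies of $\tld\rho_0$, and I may assume henceforth that $V_\lambda\cong\tld\rho_0$ for all $\lambda$.

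\emph{A distinguished submodule.} I claim $\tld\rho_0$ has a unique submodule with cosocle exactly $S$. If $M_1,M_2\subseteq\tld\rho_0$ both have cosocle $S$, the surjection $M_1\oplus M_2\twoheadrightarrow M_1+M_2$ shows $\cosoc(M_1+M_2)$ is a quotient of $S\oplus S$, hence (as $S$ occurs once in the multiplicity-free module $\tld\rho_0$) isomorphic to $S$; multiplicity-freeness also forbids either $M_j$ from lying in $\rad(M_1+M_2)$, so each $M_j\to\cosoc(M_1+M_2)$ is onto, and Nakayama gives $M_1=M_1+M_2=M_2$. Call this submodule $\rho_S$; it has simple cosocle, so it is indecomposable, and it is a nonzero submodule of $\rho_0$, whence $\rho_S\in\Sigma$. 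For each $\lambda$, the image $\pi_\lambda(V')$ is $0$ or a submodule of $V_\lambda$ with cosocle $S$ (by the Reductions step), i.e.\ $0$ or the copy $\rho_S^{(\lambda)}$ of $\rho_S$ inside $V_\lambda$; setting $\Lambda_1=\{\lambda:\pi_\lambda(V')\neq 0\}$ we get $V'\subseteq\bigoplus_{\lambda\in\Lambda_1}\rho_S^{(\lambda)}$ with $\pi_\lambda(V')=\rho_S^{(\lambda)}$ for all $\lambda\in\Lambda_1$.

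\emph{Cyclicity and conclusion.} Choose $v\in V'$ lifting a generator of the simple cosocle, so $V'=R[G]v+\rad_{R[G]}(V')$, and note $v$ is supported on a finite subset $\Lambda_1'\subseteq\Lambda_1$. For $\lambda\in\Lambda_1\setminus\Lambda_1'$ one has $\pi_\lambda(R[G]v)=0$, while $\pi_\lambda(\rad(V'))\subseteq\rad(\rho_S^{(\lambda)})\subsetneq\rho_S^{(\lambda)}$, using $\rad(V')\subseteq\rad\bigl(\bigoplus_{\lambda\in\Lambda_1}\rho_S^{(\lambda)}\bigr)=\bigoplus_{\lambda\in\Lambda_1}\rad(\rho_S^{(\lambda)})$; this contradicts $\pi_\lambda(V')=\rho_S^{(\lambda)}$, so $\Lambda_1=\Lambda_1'$ is finite. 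Hence $V'$ is a submodule of the finite-length module $\bigoplus_{\lambda\in\Lambda_1}\rho_S^{(\lambda)}$, so it is finitely generated, and Nakayama upgrades $V'=R[G]v+\rad(V')$ to $V'=R[G]v$. Finally, writing $v=(v_\lambda)_{\lambda\in\Lambda_1}$, each $v_\lambda$ generates $\pi_\lambda(V')=\rho_S^{(\lambda)}$; fixing $\lambda_0\in\Lambda_1$, any $r\in R[G]$ killing $v_{\lambda_0}$ kills $\rho_S^{(\lambda_0)}$ and therefore kills every $\rho_S^{(\lambda)}\cong\rho_S^{(\lambda_0)}$, hence kills $v$, so $\pi_{\lambda_0}$ restricts to an isomorphism $V'=R[G]v\xrightarrow{\ \sim\ }\rho_S^{(\lambda_0)}\cong\rho_S\in\Sigma$. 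The step I expect to need the most care is the passage to cyclicity: $V'$ is a priori only locally finite and $R[G]$ need not be Noetherian, so one must first exploit the direct-sum structure — via the finiteness of $\Lambda_1$ — before Nakayama is available; the ancillary facts about radicals used there ($\rad(N)\subseteq\rad(M)$ for $N\subseteq M$, and compatibility of $\rad$ with direct sums) are standard and follow directly from the description of $\rad(\cdot)$ as the intersection of the maximal submodules.
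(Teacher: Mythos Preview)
Your reductions and the cyclicity argument are correct and nicely done, but the final step contains a genuine gap. You assert that ``any $r\in R[G]$ killing $v_{\lambda_0}$ kills $\rho_S^{(\lambda_0)}$'', i.e.\ that the left annihilator of a single cyclic generator coincides with the two-sided annihilator of the module. This is false in general: take $R=\Q$, $G=S_3$, and $\rho_S$ the $2$-dimensional simple module; if $e_1,e_2$ is a basis, there are elements of $\Q[S_3]$ (those mapping to $\bigl(\begin{smallmatrix}0&\ast\\0&\ast\end{smallmatrix}\bigr)$ in the $M_2(\Q)$-factor) that kill $e_1$ but not $e_2$. What you actually need is $\mathrm{Ann}(v_{\lambda_0})\subseteq\mathrm{Ann}(v_\lambda)$ for every $\lambda$, which would follow from an $R[G]$-automorphism of $\rho_S$ sending $v_{\lambda_0}$ to $v_\lambda$; but $\Aut_{R[G]}(\rho_S)$ need not act transitively on generators --- even when $\End_{R[G]}(\rho_S)=R$, two generators differ by a scalar only modulo $\rad(\rho_S)$, and distinct lifts are not in general related by an automorphism.

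The paper's proof avoids this by arguing in the opposite direction. Rather than projecting $V'$ onto a coordinate, it passes to the cosocle: the inclusion $V'\subseteq V$ induces an embedding $\tau\hookrightarrow W_{\tld\rho}\otimes_R\tau$ (your $S\hookrightarrow\bigoplus_\lambda S^{(\lambda)}$), and the identification $\Hom_{R[G]}(\tld\rho,V)\cong W_{\tld\rho}\otimes_R\End_{R[G]}(\tld\rho)\xrightarrow{\sim}W_{\tld\rho}\otimes_R\End_{R[G]}(\tau)\cong\Hom_{R[G]}(\tau,W_{\tld\rho}\otimes_R\tau)$ lifts this to an embedding $f\colon\tld\rho\hookrightarrow V$. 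One then checks $V'\subseteq\mathrm{im}(f)$, and equality follows since both have cosocle $\tau$ and $\mathrm{im}(f)\cong\tld\rho$ is multiplicity free. Your setup is compatible with this: after your reduction to $\Lambda_1$ finite, the same lifting produces a copy of $\rho_S$ inside $\bigoplus_{\lambda\in\Lambda_1}\rho_S^{(\lambda)}$ containing $V'$. Alternatively, to repair your projection argument you would need to show $V'$ is multiplicity free (so that $\mathrm{length}(V')\leq\mathrm{length}(\rho_S)$, forcing $\pi_{\lambda_0}$ to be an isomorphism), but this does not seem to come for free from what you have established.
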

\begin{proof}
We write $\tau\defeq \mathrm{cosoc}_{R[G]}(V')\in\mathrm{JH}_{R[G]}(V)=\mathrm{JH}_{R[G]}(\rho_0)$. There exists a unique $\rho\subseteq \tld{\rho}\subseteq \rho_0$ such that $\tld{\rho}$ is an indecomposable direct summand of $\rho_0$ and $\mathrm{cosoc}_{R[G]}(\rho)\cong \tau$. As $V/W_{\tld{\rho}}\otimes_R\rho$ does not have $\tau$ as Jordan--H\"older factor, we may assume without loss of generality that $\rho=\tld{\rho}=\rho_0$. Then the key observation is that
\begin{equation}\label{equ: canonical lift}
\Hom_{R[G]}(\tld{\rho},V)\cong W_{\tld{\rho}}\otimes_{R}\End_{R[G]}(\tld{\rho})\xrightarrow{\sim}W_{\tld{\rho}}\otimes_{R}\End_{R[G]}(\tau)\cong \Hom_{R[G]}(\tau,W_{\tld{\rho}}\otimes_R\tau).
\end{equation}
The $R[G]$-submodule $V'\subseteq V$ determines an embedding $\tau\hookrightarrow W_{\tld{\rho}}\otimes_R\tau$ and thus (by (\ref{equ: canonical lift}) an embedding $f:\tld{\rho}\hookrightarrow V$. We write $\mathrm{rad}(V')$ for the kernel of $V'\twoheadrightarrow\mathrm{cosoc}_{R[G]}(V')$. As the canonical map $V'/\mathrm{rad}(V')\rightarrow V/(\mathrm{im}(f)+\mathrm{rad}(V'))$ is zero by the choice of $f$, so is the map $V'\rightarrow V/\mathrm{im}(f)$, which implies that $V'\subseteq \mathrm{im}(f)$. This inclusion must be an equality as both $R[G]$-modules share the same cosocle.
\end{proof}

\begin{lemma}\label{lem: mult free sub}
Let $V'\subseteq V$ be an $R[G]$-submodule. If $V'$ is multiplicity free, then there exists an embedding $V'\hookrightarrow\rho_0$.
\end{lemma}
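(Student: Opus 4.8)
The plan is to split into two cases according to whether $V'$ is semisimple. First note that, since $V$ is locally finite with $\mathrm{JH}_{R[G]}(V)=\mathrm{JH}_{R[G]}(\rho_0)$ finite, $V'$ has finite length, and multiplicity freeness means its Jordan--H\"older factors are pairwise distinct.

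If $V'$ is semisimple I would argue directly. Then $V'\subseteq\soc_{R[G]}(V)=\bigoplus_{\tld{\rho}}W_{\tld{\rho}}\otimes_R\soc_{R[G]}(\tld{\rho})$, so each $\tau\in\mathrm{JH}_{R[G]}(V')$ must lie in $\soc_{R[G]}(\tld{\rho})$, where $\tld{\rho}$ is the unique indecomposable summand of $\rho_0$ containing $\tau$ as a Jordan--H\"older factor. Hence $V'$, being semisimple and multiplicity free, is isomorphic to the multiplicity-free semisimple submodule $\bigoplus_{\tld{\rho}}S_{\tld{\rho}}\subseteq\soc_{R[G]}(\rho_0)\subseteq\rho_0$, where $S_{\tld{\rho}}\subseteq\soc_{R[G]}(\tld{\rho})$ is the sub-sum whose Jordan--H\"older factors are exactly those $\tau\in\mathrm{JH}_{R[G]}(V')$ assigned to $\tld{\rho}$; this gives the desired embedding.

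Now assume $V'$ is not semisimple, so $\rad_{R[G]}(V')\neq 0$. Write $\cosoc_{R[G]}(V')=\bigoplus_{i=1}^{m}\tau_i$ with the $\tau_i$ distinct, and let $V_i'\subseteq V'$ be the preimage of $\tau_i$, so that $\rad V'\subseteq V_i'$, $V_i'/\rad V'\cong\tau_i$, and $\cosoc V_i'\cong\tau_i$ is irreducible. By Lemma~\ref{lem: irr cosocle}, $V_i'\cong\rho_i$ for a unique $\rho_i\in\Sigma$; from that lemma's proof, $\rho_i\subseteq\tld{\rho}_i$ with $\tld{\rho}_i$ the indecomposable summand of $\rho_0$ containing $\tau_i$, and $\cosoc\rho_i\cong\tau_i$. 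A cosocle argument as in the proof of Lemma~\ref{lem: irr cosocle} shows $V_i'\subseteq W_{\tld{\rho}_i}\otimes_R\tld{\rho}_i$, and combining with Lemma~\ref{lem: extend} (which forces $\End_{R[G]}(\rho_i)=R$, as in its proof) one checks that $V_i'=w_i\otimes\rho_i$ for some nonzero $w_i\in W_{\tld{\rho}_i}$. The key observation is that if $\tld{\rho}_i=\tld{\rho}_j=:\tld{\rho}$ with $i\neq j$, then $w_i$ and $w_j$ are proportional: otherwise $(w_i\otimes\tld{\rho})\cap(w_j\otimes\tld{\rho})=0$ inside $W_{\tld{\rho}}\otimes_R\tld{\rho}$, forcing $V_i'\cap V_j'=0$, whereas $V_i'\cap V_j'$ is the preimage of $\tau_i\cap\tau_j=0$, namely $\rad V'\neq 0$ --- a contradiction.

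Grouping indices, for each indecomposable summand $\tld{\rho}$ put $I_{\tld{\rho}}=\{i:\tld{\rho}_i=\tld{\rho}\}$; by the previous paragraph all $w_i$ with $i\in I_{\tld{\rho}}$ lie on a common line, so I can choose a linear functional $\phi_{\tld{\rho}}\colon W_{\tld{\rho}}\to R$ not vanishing on any of them and set $g_{\tld{\rho}}\defeq(\phi_{\tld{\rho}}\otimes\mathrm{id}_{\tld{\rho}})\circ\iota_{\tld{\rho}}\colon V'\to\tld{\rho}$, where $\iota_{\tld{\rho}}\colon V'\into V\onto W_{\tld{\rho}}\otimes_R\tld{\rho}$ is the projection (and $g_{\tld{\rho}}=0$ if $I_{\tld{\rho}}=\varnothing$). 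Then $g_{\tld{\rho}}|_{V_i'}$ is injective for each $i\in I_{\tld{\rho}}$, so $\psi\defeq(g_{\tld{\rho}})_{\tld{\rho}}\colon V'\to\bigoplus_{\tld{\rho}}\tld{\rho}=\rho_0$ is injective on every $V_i'$. Consequently $\ker\psi$ meets each $V_i'$, hence also $\rad V'$, trivially; thus $\ker\psi$ embeds into $\cosoc V'=\bigoplus_i\tau_i$ and is a sub-sum of it, and if it were nonzero it would contain some $\tau_{i_0}$, yielding a nonzero element of $\ker\psi\cap V_{i_0}'$ --- impossible. So $\psi$ is the desired embedding. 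I expect the main obstacle to be the proportionality step, together with the point that it genuinely fails for semisimple $V'$ and so that case must be argued separately; the other delicate point is extracting the explicit form $V_i'=w_i\otimes\rho_i$ from Lemmas~\ref{lem: extend} and~\ref{lem: irr cosocle}.
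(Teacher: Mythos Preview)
There is a genuine gap in your non-semisimple case: the assertion that $\cosoc_{R[G]}(V_i')\cong\tau_i$ is unjustified and in general false. From $V_i'/\rad V'\cong\tau_i$ you only get that $\rad V'$ is a maximal submodule of $V_i'$, hence $\rad V_i'\subseteq\rad V'$; this inclusion can be strict, and then $\cosoc V_i'$ is strictly larger than $\tau_i$. For a concrete instance, take $\rho_0=V'$ itself indecomposable with $\cosoc V'=\tau_1\oplus\tau_2$ and $\rad V'=\soc V'=\tau_3\oplus\tau_4$, arranged so that $\tau_1$ sits nontrivially only over $\tau_3$ while $\tau_2$ sits nontrivially over both $\tau_3$ and $\tau_4$ (such modules exist for suitable $G$, and $V'$ is indecomposable since its ``extension graph'' $\tau_1\text{--}\tau_3\text{--}\tau_2\text{--}\tau_4$ is connected). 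Then the preimage $V_1'$ of $\tau_1$ splits as $(\text{nonsplit extension of }\tau_1\text{ by }\tau_3)\oplus\tau_4$, so $\cosoc V_1'=\tau_1\oplus\tau_4$ and Lemma~\ref{lem: irr cosocle} does not apply. Since this already happens with $V'$ indecomposable, a preliminary reduction to that case would not rescue the argument; and if you instead shrink $V_i'$ to a submodule with genuinely irreducible cosocle $\tau_i$, you lose the inclusion $\rad V'\subseteq V_i'$, which is exactly what your proportionality step relies on.

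The paper's proof sidesteps this by working with the \emph{socle} rather than the cosocle. After reducing to $V'$ indecomposable, the fact that the summands $W_{\tld\rho}\otimes_R\tld\rho$ have pairwise disjoint Jordan--H\"older factors forces $V'\subseteq W_{\tld\rho}\otimes_R\tld\rho$ for a single $\tld\rho$. Each simple $\tau_t\subseteq\soc V'$ then corresponds to a nonzero vector $f_t\in W_{\tld\rho}$, and any linear form $\ell\colon W_{\tld\rho}\to R$ with $\ell(f_t)\neq 0$ for all $t$ makes $\ell\otimes\mathrm{id}_{\tld\rho}$ injective on $\soc V'$, hence on all of $V'$. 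The $f_t$ need not be proportional, so one must allow $R$ to be infinite (the paper passes to the algebraic closure); this is precisely the step your proportionality trick was designed to avoid, but the cosocle route does not deliver that shortcut.
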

\begin{proof}
By writing $V'$ as direct sum of its indecomposable direct summands, it suffices to assume that $V'$ is indecomposable and find $\rho\in\Sigma$ such that $V'\cong \rho$. As $\mathrm{JH}_{R[G]}(V)=\mathrm{JH}_{R[G]}(\rho_0)$ is finite, we deduce that $V'$ has finite length. By writing each $W_{\tld{\rho}}=\displaystyle{\varinjlim_k}W_{\tld{\rho},k}$ as a direct limit of its finite dimensional subspaces and then using the fact that $V'$ has finite length, we may assume without loss of generality that $W_{\tld{\rho}}$ is finite dimensional for each indecomposable direct summand $\tld{\rho}$ of $\rho_0$. We write $\mathrm{soc}_{R[G]}V'\cong\bigoplus_{t=1}^s\tau_t$, then each $\tau_t\subseteq V'\subseteq V$ determines a unique $\tld{\rho}_t$ containing $\tau_t$ as well as an element $f_t\in \Hom_{R[G]}(\tau_t,V)\cong \Hom_{R[G]}(\tld{\rho}_t,V)\cong W_{\tld{\rho}_t}$. As $V'$ is indecomposable and $W_{\tld{\rho}}\otimes_R\tld{\rho}$ do not share common Jordan--H\"older factor for different $\tld{\rho}$, we deduce that all $\tld{\rho}_t$ equal the same $\tld{\rho}$. As it is harmless to replace $R$ with its algebraic closure which is an infinite field, there exists $\ell: W_{\tld{\rho}}\rightarrow R$ such that $\ell(f_t)\neq 0$ for each $1\leq t\leq s$. Hence, $\ell\otimes_R\tld{\rho}: W_{\tld{\rho}}\otimes_R\tld{\rho}\rightarrow\tld{\rho}$ restricted to an injection on $\mathrm{soc}_{R[G]}(V')$, and thus an injection on $V'$ as well. We conclude by the observation that any indecomposable $R[G]$-submodule of $\tld{\rho}$ is in $\Sigma$.
\end{proof}

\begin{lemma}\label{lem: direct summand criterion}
Let $V'\subseteq V$ be an $R[G]$-submodule. Assume that
\begin{itemize}
\item $\mathrm{JH}_{R[G]}(V')=\mathrm{JH}_{R[G]}(\rho_0)$; and
\item for each indecomposable direct summand $\tld{\rho}$ of $\rho_0$ and each embedding $f: \tld{\rho}\hookrightarrow V$, we have either $\mathrm{im}(f)\subseteq V'$ or $\mathrm{im}(f)\cap V'=0$.
\end{itemize}
Then $V'$ is $\rho_0$-typic.
\end{lemma}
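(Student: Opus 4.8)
The plan is to build the typic decomposition of $V'$ by hand: first show that $V'$ is generated, as an $R[G]$-module, by the images of embeddings of \emph{full} indecomposable summands $\tld{\rho}$ of $\rho_0$ into $V'$, and then identify $V'$ with $\bigoplus_{\tld{\rho}}\Hom_{R[G]}(\tld{\rho},V')\otimes_{\End_{R[G]}(\tld{\rho})}\tld{\rho}$. Two preliminary inputs are needed. First, since $\rho_0$ is multiplicity free, its distinct indecomposable summands have pairwise disjoint sets of Jordan--H\"older factors; hence every $R[G]$-submodule of $V\cong\bigoplus_{\tld{\rho}}W_{\tld{\rho}}\otimes_R\tld{\rho}$ decomposes as the direct sum of its intersections with the summands $W_{\tld{\rho}}\otimes_R\tld{\rho}$, and every $R[G]$-map $\tld{\rho}\to V$ lands in $W_{\tld{\rho}}\otimes_R\tld{\rho}$. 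Second, for each indecomposable summand $\tld{\rho}$ of $\rho_0$ the ring $D_{\tld{\rho}}:=\End_{R[G]}(\tld{\rho})$ is a division ring: it is local because $\tld{\rho}$ has finite length and is indecomposable, and a nonzero non-unit would be nilpotent, so a suitable power $m$ would satisfy $m^2=0$ and $0\neq\image(m)\subseteq\Ker(m)$, forcing some Jordan--H\"older factor of $\tld{\rho}$ to occur both in a submodule and in a quotient of $\tld{\rho}$ --- impossible by multiplicity freeness. In particular $\Hom_{R[G]}(\tld{\rho},V)=W_{\tld{\rho}}\otimes_R D_{\tld{\rho}}$ as right $D_{\tld{\rho}}$-modules, and every module over $D_{\tld{\rho}}$ is free.

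The main step is the generation statement. Each $R[G]\langle v\rangle$ with $v\in V'$ has finite length, hence is a sum of submodules with irreducible cosocle; therefore $V'$ is a sum of submodules $V''_\beta$ with irreducible cosocle. By Lemma~\ref{lem: irr cosocle}, $V''_\beta\cong\rho_\beta$ for some $\rho_\beta\in\Sigma$, and by the first preliminary the indecomposable $\rho_\beta$ lies inside a unique indecomposable summand $\tld{\rho}_\beta$ of $\rho_0$. By Lemma~\ref{lem: extend}, the composite $\rho_\beta\cong V''_\beta\hookrightarrow V$ extends to a map $f_\beta\colon\tld{\rho}_\beta\to V$ restricting to an embedding on $\rho_\beta$. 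I would then check that $f_\beta$ is injective: a nonzero kernel would make $\tld{\rho}_\beta/\Ker(f_\beta)$ a nonzero proper quotient of $\tld{\rho}_\beta$ embedding into $W_{\tld{\rho}_\beta}\otimes_R\tld{\rho}_\beta$, and composing with the coordinate projections $W_{\tld{\rho}_\beta}\otimes_R\tld{\rho}_\beta\to\tld{\rho}_\beta$ would produce non-injective --- hence, by the division-ring property and finite length, zero --- endomorphisms of $\tld{\rho}_\beta$, forcing the embedding itself to vanish, a contradiction. Since $V''_\beta\subseteq\image(f_\beta)\cap V'$ is nonzero, the second hypothesis, applied to the embedding $f_\beta$ of the indecomposable summand $\tld{\rho}_\beta$, gives $\image(f_\beta)\subseteq V'$. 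Hence $V'=\sum_\beta V''_\beta\subseteq\sum_\beta\image(f_\beta)\subseteq V'$, so $V'$ is the sum of the $\image(f_\beta)$, each of which is the image of a map in $\Hom_{R[G]}(\tld{\rho}_\beta,V')$.

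To conclude, consider the evaluation map $\bigoplus_{\tld{\rho}}\bigl(\Hom_{R[G]}(\tld{\rho},V')\otimes_{D_{\tld{\rho}}}\tld{\rho}\bigr)\to V'$. It is surjective by the previous paragraph, and it is injective because, after composition with $V'\hookrightarrow V$, it is obtained from the inclusions $\Hom_{R[G]}(\tld{\rho},V')\hookrightarrow\Hom_{R[G]}(\tld{\rho},V)=W_{\tld{\rho}}\otimes_R D_{\tld{\rho}}$ by applying the exact functor $-\otimes_{D_{\tld{\rho}}}\tld{\rho}$ and the canonical isomorphism $(W_{\tld{\rho}}\otimes_R D_{\tld{\rho}})\otimes_{D_{\tld{\rho}}}\tld{\rho}\cong W_{\tld{\rho}}\otimes_R\tld{\rho}$. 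Thus $V'\cong\bigoplus_{\tld{\rho}}\Hom_{R[G]}(\tld{\rho},V')\otimes_{D_{\tld{\rho}}}\tld{\rho}$; each $\Hom_{R[G]}(\tld{\rho},V')$ is free over the division ring $D_{\tld{\rho}}$, so each summand is $\cong R^{(J_{\tld{\rho}})}\otimes_R\tld{\rho}$ for some index set $J_{\tld{\rho}}$, and the hypothesis $\JH_{R[G]}(V')=\JH_{R[G]}(\rho_0)$ forces $J_{\tld{\rho}}\neq\emptyset$ for every $\tld{\rho}$. Hence $V'$ is $\rho_0$-typic.

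I expect the generation step of the second paragraph to be the main obstacle: passing from an arbitrary $\rho\in\Sigma$ inside $V'$ to a full summand $\tld{\rho}\supseteq\rho$ whose unique extension still lands in $V'$ is exactly where the second hypothesis is used, and keeping that extension injective is exactly where multiplicity freeness (through the division-ring property) is needed. Dropping the second hypothesis already fails for $V=\tld{\rho}\oplus\tld{\rho}$ with $\tld{\rho}$ uniserial of length $2$, where a suitable submodule of length $3$ satisfies $\JH_{R[G]}(V')=\JH_{R[G]}(\tld{\rho})$ but is not typic.
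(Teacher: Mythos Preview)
Your argument is correct and follows the same architecture as the paper's proof: define the candidate decomposition via the spaces $\Hom_{R[G]}(\tld\rho,V')$, prove surjectivity of the evaluation map by writing $V'$ as a sum of submodules with irreducible cosocle, invoke Lemma~\ref{lem: irr cosocle} and Lemma~\ref{lem: extend} to extend each such piece to a map $f:\tld\rho\to V$, show $f$ is injective, and then apply the second hypothesis. The one place you diverge is in the injectivity of $f$: the paper appeals to Lemma~\ref{lem: mult free sub} (the multiplicity-free image $\mathrm{im}(f)$ embeds back into $\tld\rho$, forcing $\tld\rho\cong\ker(f)\oplus\mathrm{im}(f)$ and hence $\ker(f)=0$), whereas you instead establish directly that $D_{\tld\rho}=\End_{R[G]}(\tld\rho)$ is a division ring and use that every nonzero element of $\Hom_{R[G]}(\tld\rho,V)\cong W_{\tld\rho}\otimes_R D_{\tld\rho}$ is therefore injective. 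Your route avoids Lemma~\ref{lem: mult free sub} entirely and is a bit more careful about the distinction between $D_{\tld\rho}$ and $R$ (the paper tacitly identifies $\Hom_{R[G]}(\rho,\tld\rho)$ with $R$ in the proof of Lemma~\ref{lem: extend}); the paper's route keeps the argument phrased in terms of $R$-subspaces $W'_{\tld\rho}\subseteq W_{\tld\rho}$ without introducing $D_{\tld\rho}$. Both are short and either can be substituted for the other.
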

\begin{proof}
Recall that we have $V\cong \bigoplus_{\tld{\rho}}W_{\tld{\rho}}\otimes_R \tld{\rho}$ and the identification $W_{\tld{\rho}}\cong \Hom_{R[G]}(\tld{\rho},V)$ for each indecomposable direct summand $\tld{\rho}$ of $\rho_0$. We write $W'_{\tld{\rho}}\subseteq W_{\tld{\rho}}$ for the subspace of all morphisms $f: \tld{\rho}\rightarrow V$ satisfying $\mathrm{im}(f)\subseteq V'$. We claim that the natural map
\begin{equation}\label{equ: natural map}
\bigoplus_{\tld{\rho}}W'_{\tld{\rho}}\otimes_R \tld{\rho}\rightarrow V'
\end{equation}
is an isomorphism. The compatibility with $V\cong \bigoplus_{\tld{\rho}}W_{\tld{\rho}}\otimes_R \tld{\rho}$ forces (\ref{equ: natural map}) to be injective. As $V'$ is sum of its $R[G]$-submodules with irreducible cosocle, it suffices to prove that each such $R[G]$-submodule $V''$ of $V'$ is contained in the image of (\ref{equ: natural map}). In fact, it follows from Lemma~\ref{lem: irr cosocle} that there exists $\rho\in\Sigma$ such that $V''\cong \rho$. Hence, we deduce from Lemma~\ref{lem: extend} that there exists an indecomposable direct summand $\tld{\rho}$ of $\rho_0$ as well as $f\in\Hom_{R[G]}(\tld{\rho},V)$ such that $\tld{\rho}\supseteq \rho$ and $V''\subseteq \mathrm{im}(f)$. As $\mathrm{im}(f)$ is multiplicity free, it embeds into $\rho_0$ by Lemma~\ref{lem: mult free sub}, and thus embeds into $\tld{\rho}$ by checking Jordan--H\"older factors. This forces $\tld{\rho}\cong \mathrm{ker}(f)\oplus\mathrm{im}(f)$ and thus $\mathrm{ker}(f)=0$ as $\tld{\rho}$ is indecomposable. In other words, $f$ is an embedding with $0\neq V''\subseteq \mathrm{im}(f)\cap V'$, which together with our assumption implies that $\mathrm{im}(f)\subseteq V'$. Hence, $\mathrm{im}(f)$ is contained in the image of (\ref{equ: natural map}), and so is $V''$. Note that $\mathrm{JH}_{R[G]}(V')=\mathrm{JH}_{R[G]}(\rho_0)$ forces $W'_{\tld{\rho}}\neq 0$ for each indecomposable direct summand $\tld{\rho}$ of $\rho_0$. The proof is thus completed.
\end{proof}

\begin{prop}\label{prop: main criterion}
Let $\rho_0$ be a multiplicity free $R[G]$-module of finite length. Let $V$ be a $\rho_0$-typic $R[G]$-module with a sequence of sub $R[G]$-modules $V_1\subseteq V_2\subseteq \cdots$ satisfying the following conditions
\begin{itemize}
\item $V=\bigcup_{r\geq 1}V_r$; and
\item for each $r\geq 1$, there exists an embedding $V_{r+1}/V_r\hookrightarrow V_1^{\oplus s_r}$ for some $s_r\geq 1$.
\end{itemize}
Then $V_1$ is $\rho_0$-typic. In particular, $V_1$ determines $\rho_0$ up to isomorphism.
\end{prop}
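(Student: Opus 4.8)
The plan is to verify the two hypotheses of Lemma~\ref{lem: direct summand criterion} for the $R[G]$-submodule $V_1\subseteq V$; the last assertion of the proposition is then immediate from the structure of a $\rho_0$-typic module.

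First I would dispose of the hypothesis $\mathrm{JH}_{R[G]}(V_1)=\mathrm{JH}_{R[G]}(\rho_0)$. Since $V$ is $\rho_0$-typic we have $\mathrm{JH}_{R[G]}(V)=\mathrm{JH}_{R[G]}(\rho_0)$, and since $V=\bigcup_rV_r$, any simple subquotient of $V$, being cyclic, is already a subquotient of some $V_r$, so $\mathrm{JH}_{R[G]}(V)=\bigcup_r\mathrm{JH}_{R[G]}(V_r)$. On the other hand $\mathrm{JH}_{R[G]}(V_{r+1}/V_r)\subseteq\mathrm{JH}_{R[G]}(V_1^{\oplus s_r})=\mathrm{JH}_{R[G]}(V_1)$, so induction on $r$ gives $\mathrm{JH}_{R[G]}(V_r)\subseteq\mathrm{JH}_{R[G]}(V_1)$ for all $r$. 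Hence $\mathrm{JH}_{R[G]}(V_1)=\mathrm{JH}_{R[G]}(V)=\mathrm{JH}_{R[G]}(\rho_0)$, as wanted.

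The crux is the second hypothesis: for every indecomposable direct summand $\tld{\rho}$ of $\rho_0$ and every embedding $f\colon\tld{\rho}\hookrightarrow V$, if $M:=\mathrm{im}(f)$ satisfies $M\cap V_1\neq 0$ then $M\subseteq V_1$. Here I would first record the elementary fact, which is where multiplicity-freeness genuinely enters, that \emph{every nonzero $\phi\in\End_{R[G]}(\tld{\rho})$ is injective}: the sets $\mathrm{JH}_{R[G]}(\mathrm{ker}(\phi))$ and $\mathrm{JH}_{R[G]}(\mathrm{im}(\phi))$ partition $\mathrm{JH}_{R[G]}(\tld{\rho})$, hence are disjoint since $\tld{\rho}$ is multiplicity free; therefore $\mathrm{ker}(\phi)\cap\mathrm{im}(\phi)=0$ (a submodule of $\tld{\rho}$ with empty set of Jordan--H\"older factors), the internal sum $\mathrm{ker}(\phi)+\mathrm{im}(\phi)\subseteq\tld{\rho}$ is direct and has set of Jordan--H\"older factors all of $\mathrm{JH}_{R[G]}(\tld{\rho})$, hence equals $\tld{\rho}$, and indecomposability of $\tld{\rho}$ together with $\mathrm{im}(\phi)\neq 0$ forces $\mathrm{ker}(\phi)=0$. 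In particular $\End_{R[G]}(\tld{\rho})$ is a division ring, and since distinct indecomposable direct summands of $\rho_0$ have disjoint sets of Jordan--H\"older factors, any nonzero $R[G]$-map $\tld{\rho}\to V$ has image inside $W_{\tld{\rho}}\otimes_R\tld{\rho}$ and is therefore injective as well (a nonzero element of $\Hom_{R[G]}(\tld{\rho},W_{\tld{\rho}}\otimes_R\tld{\rho})$ has a nonzero, hence invertible, component $\tld{\rho}\to\tld{\rho}$, using that $\tld{\rho}$ is finitely generated). Now, $M\cong\tld{\rho}$ has finite length and $V=\bigcup_rV_r$, so there is a least $r\geq 1$ with $M\subseteq V_r$; assuming for contradiction $r\geq 2$, put $N:=M\cap V_{r-1}$, so that $0\neq M\cap V_1\subseteq N\subsetneq M$ (the last inclusion by minimality of $r$). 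The composite $\tld{\rho}\xrightarrow{f}M\twoheadrightarrow M/N\hookrightarrow V_r/V_{r-1}\hookrightarrow V_1^{\oplus s_{r-1}}$ gives $R[G]$-maps $g_1,\dots,g_{s_{r-1}}\colon\tld{\rho}\to V$ with $\bigcap_j\mathrm{ker}(g_j)=f^{-1}(N)$, a proper submodule of $\tld{\rho}$ that is nonzero because $N\neq 0$; but not all $g_j$ vanish (as $M/N\neq 0$), so one of them is injective by the previous remark, whence $f^{-1}(N)=0$, a contradiction. Therefore $r=1$, i.e.\ $M\subseteq V_1$.

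With both hypotheses of Lemma~\ref{lem: direct summand criterion} verified, $V_1$ is $\rho_0$-typic, say $V_1\cong\bigoplus_{\tld{\rho}}W'_{\tld{\rho}}\otimes_R\tld{\rho}\cong\bigoplus_{\tld{\rho}}\tld{\rho}^{(I_{\tld{\rho}})}$ with every index set $I_{\tld{\rho}}$ nonempty; this is a direct sum of finite-length indecomposables with division-ring endomorphism rings, so by the Krull--Schmidt--Azumaya theorem $V_1$ determines the set of isomorphism classes of its indecomposable summands, namely the set of isomorphism classes of indecomposable direct summands of $\rho_0$; since $\rho_0\cong\bigoplus_{\tld{\rho}}\tld{\rho}$, this pins down $\rho_0$ up to isomorphism. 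The only step that is not routine bookkeeping with Jordan--H\"older factors is the second hypothesis above, and within it the observation that multiplicity-freeness forces every nonzero (endo)morphism out of $\tld{\rho}$ to be injective; that is the point I would check most carefully.
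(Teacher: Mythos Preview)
Your proof is correct. Both you and the paper reduce to Lemma~\ref{lem: direct summand criterion} and handle the first hypothesis identically, but you diverge in verifying the second. The paper considers the \emph{full} filtration $V_{f,r}\defeq\mathrm{im}(f)\cap V_r$ on $\mathrm{im}(f)$, invokes Lemma~\ref{lem: mult free sub} to embed every graded piece $V_{f,r+1}/V_{f,r}$ into $\tld{\rho}$, and then argues (using that submodules of the multiplicity-free $\tld{\rho}$ are determined by their Jordan--H\"older sets) that these embedded pieces are actually direct summands, so $\tld{\rho}\cong\bigoplus_r V_{f,r+1}/V_{f,r}$; indecomposability then forces a unique nonzero piece. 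You instead isolate the single key fact that $\End_{R[G]}(\tld{\rho})$ is a division ring (hence every nonzero $R[G]$-map $\tld{\rho}\to V$ is injective), pick the minimal $r$ with $M\subseteq V_r$, and derive an immediate contradiction from a nonzero map $\tld{\rho}\to V_1^{\oplus s_{r-1}}$ with nonzero kernel $f^{-1}(N)$. Your route is a little more direct: it bypasses Lemma~\ref{lem: mult free sub} (and the implicit passage to an infinite coefficient field therein) and it makes the role of indecomposability transparent via the Fitting-type splitting $\tld{\rho}=\ker\phi\oplus\mathrm{im}\,\phi$. The paper's route, on the other hand, packages the same phenomenon as a structural statement about the whole associated graded, which is what one would want if one cared about all the $V_{f,r}$ and not just $V_{f,1}$. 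Your final appeal to Krull--Schmidt--Azumaya is fine and matches the paper's informal ``consider all indecomposable direct summands of $V_1$''.
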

\begin{proof}
Our assumption clearly implies that $\mathrm{JH}_{R[G]}(V_1)=\mathrm{JH}_{R[G]}(\rho_0)$. Let $\tld{\rho}$ be an indecomposable direct summand of $\rho_0$ and $f: \tld{\rho}\hookrightarrow V$ be an embedding. According to Lemma~\ref{lem: direct summand criterion}, it suffices to show that either $\mathrm{im}(f)\subseteq V_1$ or $\mathrm{im}(f)\cap V_1=0$. We set $V_{f,0}\defeq 0\subseteq \mathrm{im}(f)$ and $V_{f,r}\defeq \mathrm{im}(f)\cap V_r$ for each $r\geq 1$. Our assumption on $\{V_r\}_{r\geq 1}$ implies that $\{V_{f,r}\}_{r\geq 0}$ is an increasing and exhaustive filtration on $\mathrm{im}(f)$. The inclusion $\mathrm{im}(f)\subseteq V$ induces a natural embedding
$$V_{f,r+1}/V_{f,r}\hookrightarrow V_{r+1}/V_r\hookrightarrow V_1^{\oplus s_r}\hookrightarrow V^{\oplus s_r}.$$
As $V^{\oplus s_r}$ is $\rho_0$-typic and $V_{f,r+1}/V_{f,r}$ is multiplicity free, we deduce from Lemma~\ref{lem: mult free sub} that $V_{f,r+1}/V_{f,r}$ embeds into $\rho_0$, and actually embeds into $\tld{\rho}$ by checking Jordan--H\"older factors. As $V_{f,r+1}/V_{f,r}$ embeds into $\tld{\rho}\cong \mathrm{im}(f)$ for each $r\geq 0$, we deduce that
$$\tld{\rho}\cong \mathrm{im}(f)\cong \bigoplus_{r\geq 0}V_{r+1,f}/V_{r,f}.$$
However, $\tld{\rho}$ is indecomposable, and thus there exists a unique $r_f\geq 0$ such that $V_{r_f+1,f}/V_{r_f,f}\cong \tld{\rho}$ and $V_{r+1,f}=V_{r,f}$ for all $r\neq r_f$. In particular, we have $\mathrm{im}(f)\subseteq V_1$ if $r_f=0$, and $\mathrm{im}(f)\cap V_1=0$ if $r_f\geq 1$. As the $\rho_0$-typic $R[G]$-module $V_1$ determines the isomorphism class of each indecomposable direct summand $\tld{\rho}$ of $\rho_0$ (by considering all possible indecomposable direct summands of $V_1$), it clearly determines $\rho_0$ up to isomorphism. The proof is thus finished.
\end{proof}

We also have the following more general result on capturing $\rho_0$ from an $R[G]$-submodule $V'\subseteq V$ without knowing that $V'$ is $\rho_0$-typic.
\begin{prop}\label{prop: general capture}
Let $V'\subseteq V$ be an $R[G]$-submodule. Assume that $\mathrm{JH}_{R[G]}(V')=\mathrm{JH}_{R[G]}(\rho_0)$. Then $V'$ determines $\rho_0$ up to isomorphism.
\end{prop}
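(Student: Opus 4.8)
The plan is to reconstruct $\rho_0$ from the indecomposable $R[G]$-submodules of $V'$ that have irreducible cosocle. First, a purely structural observation about $\rho_0$: for every simple $\tau\in\mathrm{JH}_{R[G]}(\rho_0)$, running the minimality argument from the proof of Lemma~\ref{lem: irr cosocle} shows that any $R[G]$-submodule of $\rho_0$ minimal among those containing $\tau$ as a Jordan--H\"older factor is indecomposable with cosocle $\cong\tau$. Since $\rho_0$ is multiplicity free, the set of submodules of $\rho_0$ with cosocle $\tau$ is closed under sums (a simple quotient of $N_1+N_2$ restricts nontrivially to one of the $N_i$, hence is $\cong\tau$, and $\tau$ occurs with multiplicity one), so there is a unique such submodule of largest length; call it $\rho_\tau$, which is then the unique indecomposable submodule of $\rho_0$ with cosocle $\tau$. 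One has $\rho_0=\sum_{\tau}\rho_\tau$ with $\tau$ ranging over $\mathrm{JH}_{R[G]}(\mathrm{cosoc}_{R[G]}(\rho_0))$, since the quotient of $\rho_0$ by this sum has no simple quotient; moreover every submodule of $\rho_0$ is multiplicity free and hence itself a sum of submodules of the form $\rho_{\tau'}$. Consequently $\rho_0$ is the colimit of the diagram whose objects are the $\rho_\tau$ and whose morphisms are the inclusions $\rho_\tau\hookrightarrow\rho_{\tau'}$, which by multiplicity-freeness exist exactly when $\mathrm{JH}_{R[G]}(\rho_\tau)\subseteq\mathrm{JH}_{R[G]}(\rho_{\tau'})$ and are then essentially unique.

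Next I would match this diagram with data visible inside $V'$. On one hand, Lemma~\ref{lem: irr cosocle} shows directly that every indecomposable $R[G]$-submodule of $V'$ with irreducible cosocle is isomorphic to some $\rho_{\tau'}$. On the other hand, each $\rho_\tau$ with $\tau\in\mathrm{JH}_{R[G]}(\mathrm{cosoc}_{R[G]}(\rho_0))$ is isomorphic to a submodule of $V'$: since $\mathrm{rad}(V')\subseteq\mathrm{rad}(V)$ and $\mathrm{JH}_{R[G]}(\mathrm{rad}(V))=\mathrm{JH}_{R[G]}(\mathrm{rad}(\rho_0))$ does not contain the multiplicity-one factor $\tau$, whereas $\tau\in\mathrm{JH}_{R[G]}(V')$, we get $\tau\in\mathrm{JH}_{R[G]}(\mathrm{cosoc}_{R[G]}(V'))$; choosing a maximal submodule $M\subseteq V'$ with $V'/M\cong\tau$ and a submodule $N\subseteq V'$ minimal with $N+M=V'$, the same dichotomy as above forces $N$ to have a single maximal submodule, so $\mathrm{cosoc}_{R[G]}(N)\cong\tau$ is irreducible, whence $N\cong\rho_\tau$ by Lemma~\ref{lem: irr cosocle} together with the uniqueness of $\rho_\tau$. (Only local finiteness of $V'$ is needed for these minimal/maximal submodules to exist; one may also first shrink each $W_{\tld\rho}$ to a finite-dimensional subspace.) Thus the set of isomorphism classes of indecomposable submodules of $V'$ with irreducible cosocle, together with the relation of being isomorphic to a submodule of one another, recovers the diagram of the first paragraph, possibly enlarged by further $\rho_{\tau'}$ — all of which are again submodules of $\rho_0$.

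It follows that $V'$ determines $\rho_0$, namely as the colimit of this recovered diagram, finishing the proof. The step I expect to be the main obstacle is precisely this final reconstruction: verifying that the abstract data (isomorphism classes of the indecomposable-with-irreducible-cosocle submodules of $V'$ and the sub-object relations among them) pin down the diagram rigidly enough that its colimit is $\rho_0$, and that adjoining extra submodules of $\rho_0$ to the diagram does not change the colimit. This is exactly where multiplicity-freeness of $\rho_0$ is used in an essential way, through the rigidity of the inclusions among the $\rho_\tau$ and the identity $\rho_0=\sum_\tau\rho_\tau$. By comparison, the embedding $\rho_\tau\hookrightarrow V'$ in the second paragraph is comparatively formal once Lemma~\ref{lem: irr cosocle} is in hand.
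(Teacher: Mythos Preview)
Your strategy coincides with the paper's: both use Lemma~\ref{lem: irr cosocle} together with the hypothesis $\mathrm{JH}_{R[G]}(V')=\mathrm{JH}_{R[G]}(\rho_0)$ to show that $V'$ recovers the full family of isomorphism classes $\{[\rho_\tau]\}_{\tau\in\mathrm{JH}_{R[G]}(\rho_0)}$ (where $\rho_\tau\subseteq\rho_0$ is the unique submodule with cosocle $\tau$), and both then argue that this family alone determines $\rho_0$. Note incidentally that the inclusion relation you record is already encoded in the isomorphism classes, since $\rho_\tau$ embeds into $\rho_{\tau'}$ if and only if $\mathrm{JH}_{R[G]}(\rho_\tau)\subseteq\mathrm{JH}_{R[G]}(\rho_{\tau'})$.

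The difference lies entirely in the step you yourself flag as the main obstacle. You want $\rho_0$ to be the colimit of the diagram of $\rho_\tau$'s, but from $V'$ you only have abstract representatives of the $[\rho_\tau]$ and the poset of embeddability; to take a colimit you must choose actual embeddings and check that the result is independent of all choices (including coherence over composable chains). You do not carry this out. The paper avoids the coherence problem by an induction on length rather than a single colimit: for a submodule $\rho\subseteq\rho_0$ and $\rho'\subseteq\rho$ with $\rho/\rho'\cong\tau_0$, one has $\rho\cong\rho'\oplus_{\rho'\cap\rho_{\tau_0}}\rho_{\tau_0}$, and by induction the isomorphism classes of $\rho'$ and of $\rho'\cap\rho_{\tau_0}$ (whose Jordan--H\"older set is $\mathrm{JH}_{R[G]}(\rho')\cap\mathrm{JH}_{R[G]}(\rho_{\tau_0})$) are already determined by $\{[\rho_\tau]\}_\tau$. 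Only two embeddings enter each pushout, each unique up to automorphisms of source and target, so the isomorphism class of the pushout is well-defined; iterating up to $\rho=\rho_0$ finishes. This inductive amalgamated-sum argument is exactly the missing piece in your proof, and once it is supplied your colimit description is justified.
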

\begin{proof}
As $\rho_0$ is multiplicity free, for each $\tau\in\mathrm{JH}_{R[G]}(\rho_0)$, there exists a unique $R[G]$-submodule $\rho_\tau\subseteq \rho_0$ such that $\mathrm{cosoc}_{R[G]}(\rho_\tau)\cong \tau$. It follows from Lemma~\ref{lem: irr cosocle} that, for each $\tau\in\mathrm{JH}_{R[G]}(\rho_0)$, any $R[G]$-submodule $V''\subseteq V'$ satisfying $\mathrm{cosoc}_{R[G]}(V'')\cong \tau$ must also satisfy $V''\cong \rho_\tau$. Consequently, we deduce from $\mathrm{JH}_{R[G]}(V')=\mathrm{JH}_{R[G]}(\rho_0)$ that $V'$ determines the set of isomorphism classes $\{[\rho_\tau]\}_{\tau\in\mathrm{JH}_{R[G]}(\rho_0)}$. It suffices to show that $\{[\rho_\tau]\}_{\tau\in\mathrm{JH}_{R[G]}(\rho_0)}$ determines $\rho_0$ up to isomorphism. We prove that $\{[\rho_\tau]\}_{\tau\in\mathrm{JH}_{R[G]}(\rho_0)}$ determines $\rho$ up to isomorphism for each $R[G]$-submodule $\rho\subseteq \rho_0$ by induction on the length of $\rho$. Let $\rho'\subseteq \rho$ be two $R[G]$-submodules of $\rho_0$ with $\rho/\rho'\cong \tau_0$ for some $\tau_0\in\mathrm{JH}_{R[G]}(\rho_0)$. Assume first that $\{[\rho_\tau]\}_{\tau\in\mathrm{JH}_{R[G]}(\rho_0)}$ determines $\rho'$ and $\rho'\cap\rho_{\tau_0}$ up to isomorphism. We choose two embeddings $f_1: \rho'\cap \rho_{\tau_0}\rightarrow \rho'$ and $f_2: \rho'\cap\rho_{\tau_0}\rightarrow\rho_{\tau_0}$ and note that the choice of the pair $f_1,f_2$ is unique up to automorphims of $\rho'$, $\rho_{\tau_0}$ and $\rho'\cap\rho_{\tau_0}$. Hence, the isomorphism class of the amalgamate sum $\rho'\oplus_{\rho'\cap\rho_{\tau_0}}\rho_{\tau_0}$ does not depend on the choice of $f_1,f_2$. It is obvious that $\rho\cong \rho'\oplus_{\rho'\cap\rho_{\tau_0}}\rho_{\tau_0}$ and thus $\{[\rho_\tau]\}_{\tau\in\mathrm{JH}_{R[G]}(\rho_0)}$ determines $\rho$ up to isomorphism. The proof is thus finished by an induction on length.
\end{proof}

\section{Application to mod-$p$ local global compatibility via Scholze's functor}\label{sec: LGC}
We first establish the global setup for the study of the cohomology of the relevant Shimura varieties.

Fix an integer $n>2$ and a prime $p$. Let $F$ be a CM field which is a quadratic extension of its maximal totally real subfield $F^+$. We write $c$ for the unique non-trivial element of $\mathrm{Gal}(F/F^+)$. Let $B$ be the division algebra over $F$ of dimension $n^2$ as chosen in Section 0.1 of \cite{BZ99} (thus equipped with a certain involution $b \mapsto b^*$ on it) and
 $\tld{G}$ be the algebraic group over $F^+$ whose group of $R$-points for any $F^+$-algebra $R$ is given by
$$\tld{G}(R)\defeq \{(g,\lambda)\in(B^{\text{op}}\otimes_{F^+}R)^\times\times R^\times |\,gg^*=\lambda\}.$$
We assume that $v$ splits in $F$ for each finite place $v$ of $F^+$ dividing $p$. We fix a finite place $\fp$ of $F^+$ (resp.~$\fq$ of $F$) such that $\fp=\fq\fq^c$. Then our choice of the division algebra $B$ above implies that $B_{\fq}$ is a division algebra over $F_{\fq}$ of invariant $\frac{1}{n}$.

Let $G\defeq \mathrm{Res}_{F^+/\Q}(\tld{G})$ be the Weil restriction of scalars. Let $\mathbb{S}\defeq \mathrm{Res}_{\C/\R}(\mathbb{G}_m)$ be the Deligne torus and $h$ be a morphism
$$h: \mathbb{S} \to G_{\R}$$
such that $h$ defines on $W_{\R}$ a Hodge structure of type $(1,0), (0,1)$ and such that  $\psi(w_1,h(i)w_2)$ is a symmetric positive definite bilinear form on $W_{\R}$. Note that $h$ is unique up to $G(\mathbb{R)}$-conjugacy and we let $X$ denote the $G(\mathbb{R)}$-conjugacy class of $h$. Then $(G,X)$ defines a Shimura datum and for sufficiently small compact open subgroups $U \subseteq G(\A_{\Q}^{\infty})$ we have a projective system of Shimura varieties $\mathrm{Sh}_U$ over its reflex field, which can be identified with $F$ in a canonical way. We will write $\mathrm{Sh}_{KU^{\fp}}$ instead of $\mathrm{Sh}_{K\times (F^+_{\fp})^\times \times U^{\fp}}$ for the Shimura variety associated with $U=K\times (F^+_{\fp})^\times \times U^{\fp}$ for $K \subseteq (B_{\fq}^{\text{op}})^\times$ compact open and $U^{\fp} \subseteq \tld{G}(\A_{F^+}^{\infty,\fp})$ (sufficiently small) compact open.

We fix a tame level, i.e. a compact open subgroup $U^{\fp}=\prod_{v\neq \fp}U_v$ of $\tld{G}(\A_{F^+}^{\infty,\fp})$ and let $\cP$ denote the set of finite places $v$ of $F^+$ such that
\begin{itemize}
    \item $v\nmid p$;
    \item $v$ splits in $F$;
    \item $\tld{G}(F^+_v) \cong \GL_n(F^+_v) \times (F^+_v)^\times $ and $U_v$ is a maximal compact open subgroup of $\tld{G}(F^+_v)$.
\end{itemize}
Consider the abstract Hecke algebra
$$\mathbb{T}_{\cP}\defeq  \Z[T^{(j)}_{w},~T^{(j)}_{w^c}: v=ww^c\in \cP, 1\leq j\leq n]$$
where $T^{(j)}_{w}$ is the Hecke operator corresponding to the double coset
$$ \Big[\GL_{n}(\mathcal{O}_{F_{w}})
\begin{pmatrix}
\varpi_{w}1_{j} & 0\\
0 & 1_{n-j}
\end{pmatrix}\GL_{n}(\mathcal{O}_{F_{w}}) \Big].$$
Here $\varpi_{w}$ is a uniformizer of the local field $F_w$.
Then the Hecke algebra $\mathbb{T}_{\cP}$ acts on $H^i(\mathrm{Sh}_{KU^{\fp},\C},\Z)$ for all compact open $K\subseteq (B_{\fq}^{\text{op}})^\times$.

Let $\F$ be a finite extension of $\F_p$ and $\overline{\sigma}: \mathrm{Gal}_F \to \GL_{n}(\F)$ an $n$-dimensional absolutely irreducible Galois representation which is unramified at each place of $F$ dividing some $v\in\cP$. Hence, we can associate a maximal ideal $\fm\subseteq \mathbb{T}_{\cP}$ with $\overline{\sigma}$ (cf.~the paragraph before Condition~2.1 of \cite{Liu21}).

We assume the following condition from now on:
\begin{cond}\label{cond: middle deg}
For each $K\subseteq (B_{\fq}^{\emph{op}})^\times$ compact open,
$$H^{i}(\mathrm{Sh}_{KU^{\fp},\C},\Z)_{\fm} \neq 0$$
only if $i = n-1$.
\end{cond}
Let $\mathbb{T}(KU^{\fp})$ be the image of $\mathbb{T}_{\cP}$ in $\mathrm{End}(H^{n-1}(\mathrm{Sh}_{KU^{\fp},\C},\Z))$ and $\mathbb{T}(KU^{\fp})_{\fm}$ be its $\fm$-adic completion. We also consider the big Hecke algebra
$$\mathbb{T}(U^{\fp})_{\fm}\defeq \underset{U}{\underleftarrow{\mathrm{lim}}}\mathbb{T}(KU^{\fp})_{\fm}$$
which is a complete Noetherian local ring with finite residue field. 
Let $\sigma: \mathrm{Gal}_F \to \GL_{n}(\mathbb{T}(U^{\fp})_{\fm})$ be the unique (up to conjugation) lift of $\overline{\sigma}$ characterized by \cite[Proposition 2.3]{Liu21}.

Let $G'$ be the inner form of $G$ over $F^+$ such that $G'(F^+ \otimes_{\Q}\R)$ is compact modulo center, $G'(\A_{F^+}^{\infty,\fp}) = \tld{G}(\A_{F^+}^{\infty,\fp})$, and $G'(F^+_{\fp}) \cong \GL_n(F^+_{\fp}) \times (F^+_{\fp})^\times$.
Let
$\pi_{U^{\fp}}$ be the admissible $\Z_p$-representation of $\GL_n(F^+_{\fp})$ given by the space of continuous functions
$$\pi_{U^{\fp}}\defeq C^0(G'(F^+)\backslash G'(\A_{F^+}^{\infty})/((F^+_{\fp})^\times \times U^{\fp}), \Q_p/\Z_p).$$
By Corollary 6.7 of \cite{Liu21} the natural action of $\mathbb{T}_{\cP}$ on
$$\pi_{\fm}\defeq \pi_{{U^{\fp}},\fm}=C^0(G'(F^+)\backslash G'(\A_{F^+}^{\infty})/((F^+_{\fp})^\times \times U^{\fp}), \Q_p/\Z_p)_{\fm}$$
extends to a continuous action of $\mathbb{T}(U^{\fp})_{\fm}$.

In \cite[Section~3,4]{Sch18}, for each $p$-adic field $L$ and $i\geq 0$, Scholze defines a functor which sends an admissible smooth $\Z_p[\GL_n(L)]$-module $\pi$ (cf.~\cite[Definition~4.1]{Sch18}) to
$$H^{i}_{\text{\'et}}(\PP_{\C_p}^{n-1}, \cF_\pi)$$
with a natural action by $D^\times\times\mathrm{Gal}_{L}$ (see \cite[Proposition~3.1]{Sch18} for the definition of the sheaf $\cF_\pi$). Here $D$ is the central division algebra over $L$ of invariant $\frac{1}{n}$. Although not used in the rest of this note, we remark that the $D^\times$-representation $H^{i}_{\text{\'et}}(\PP_{\C_p}^{n-1}, \cF_\pi)$ is known to be admissible by \cite[Theorem~4.4]{Sch18}.

We have the following typicity result from \cite[Corollary~7.1]{Liu21}:
\begin{prop}
Assume that Condition~\ref{cond: middle deg} holds for $\fm$. Then $H^{n-1}_{\emph{\'et}}(\PP_{\C_p}^{n-1}, \cF_{\pi_{\fm}})$ is a $\sigma|_{\mathrm{Gal}_{F^+_{\fp}}}$-typic $\mathbb{T}(U^{\fp})_{\fm}[\mathrm{Gal}_{F^+_{\fp}}]$-module. In particular, $H^{n-1}_{\emph{\'et}}(\PP_{\C_p}^{n-1}, \cF_{\pi_{\fm}})[\fm]$ is a $\overline{\sigma}|_{\mathrm{Gal}_{F^+_{\fp}} }$-typic $\F[\mathrm{Gal}_{F^+_{\fp}}]$-module.
\end{prop}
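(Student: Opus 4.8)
This is \cite[Corollary~7.1]{Liu21}; we only indicate the structure of the argument and how the two assertions relate. For the first assertion, the plan is to transport the statement, via Scholze's functor, to the cohomology of the Shimura varieties $\mathrm{Sh}_{KU^{\fp}}$. By the comparison underlying Scholze's construction and its refinement in \cite{Liu21}, the $D^\times\times\mathrm{Gal}_{F^+_{\fp}}$-module $H^{n-1}_{\text{\'et}}(\PP_{\C_p}^{n-1},\cF_{\pi_{\fm}})$ is obtained from the $\fm$-localized cohomology of the tower $\{\mathrm{Sh}_{KU^{\fp}}\}_K$, with $K$ running over compact open subgroups of $(B_{\fq}^{\mathrm{op}})^\times\cong D^\times$, the passage to Scholze's side converting the levels at $\fq$ into the $D^\times$-action. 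Here Condition~\ref{cond: middle deg} is essential: it forces the $\fm$-localized cohomology to be concentrated in degree $n-1$, so that the relevant spectral sequences degenerate and the comparison becomes an honest isomorphism of $\mathbb{T}(U^{\fp})_{\fm}$-modules compatible with the $\mathrm{Gal}_{F^+_{\fp}}$-action, the latter being the restriction, to a decomposition group at $\fq$, of the $\mathrm{Gal}_F$-action on the Shimura cohomology.

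It then remains to show that this $\fm$-localized Shimura cohomology is $\sigma$-typic as a $\mathbb{T}(U^{\fp})_{\fm}[\mathrm{Gal}_F]$-module, i.e.\ isomorphic to $W\otimes_{\mathbb{T}(U^{\fp})_{\fm}}\sigma$ for some $W$ with trivial $\mathrm{Gal}_F$-action. This is where absolute irreducibility of $\overline{\sigma}$ enters: $\fm$ is non-Eisenstein, and the construction of Galois representations on the cohomology of these Kottwitz--Harris--Taylor type Shimura varieties shows that the $\mathbb{T}(U^{\fp})_{\fm}[\mathrm{Gal}_F]$-action factors through the surjection $\mathbb{T}(U^{\fp})_{\fm}[\mathrm{Gal}_F]\twoheadrightarrow M_n(\mathbb{T}(U^{\fp})_{\fm})$ determined by $\sigma$ (surjectivity by Burnside together with Nakayama, using absolute irreducibility of $\overline{\sigma}$ and the characterization of the lift $\sigma$ in \cite[Proposition~2.3]{Liu21}); Morita equivalence for $M_n(\mathbb{T}(U^{\fp})_{\fm})$ then produces the $W\otimes_{\mathbb{T}(U^{\fp})_{\fm}}\sigma$ shape. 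Restricting the $\mathrm{Gal}_F$-action along $\mathrm{Gal}_{F^+_{\fp}}\hookrightarrow\mathrm{Gal}_F$ yields $H^{n-1}_{\text{\'et}}(\PP_{\C_p}^{n-1},\cF_{\pi_{\fm}})\cong W\otimes_{\mathbb{T}(U^{\fp})_{\fm}}(\sigma|_{\mathrm{Gal}_{F^+_{\fp}}})$, which is the first assertion. Passing through $\mathrm{Gal}_F$ is unavoidable: one cannot run the Morita argument directly over $\mathrm{Gal}_{F^+_{\fp}}$, since $\overline{\sigma}|_{\mathrm{Gal}_{F^+_{\fp}}}$ need not be irreducible.

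The second assertion is then formal. Since $\sigma$ is free of rank $n$ over $\mathbb{T}(U^{\fp})_{\fm}$ with $\sigma\bmod\fm\cong\overline{\sigma}$, taking $\fm$-torsion of $H^{n-1}_{\text{\'et}}(\PP_{\C_p}^{n-1},\cF_{\pi_{\fm}})\cong W\otimes_{\mathbb{T}(U^{\fp})_{\fm}}\sigma$ gives $H^{n-1}_{\text{\'et}}(\PP_{\C_p}^{n-1},\cF_{\pi_{\fm}})[\fm]\cong W[\fm]\otimes_{\F}(\overline{\sigma}|_{\mathrm{Gal}_{F^+_{\fp}}})$, which is $\overline{\sigma}|_{\mathrm{Gal}_{F^+_{\fp}}}$-typic (and $W[\fm]\neq 0$ since the cohomology is a nonzero $p$-power-torsion module). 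The substantive difficulty lies entirely in the first assertion --- the geometric comparison with torsion coefficients under Condition~\ref{cond: middle deg}, together with the construction of Galois representations and the Carayol-type Morita argument pinning down the $\mathrm{Gal}_F$-action --- all of which is carried out in \cite{Liu21} and which we cite rather than reprove here.
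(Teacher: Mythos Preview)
The paper does not give its own proof of this proposition: it is simply quoted from \cite[Corollary~7.1]{Liu21} as an input (see the sentence immediately preceding the statement). There is therefore no in-paper argument to compare your proposal against. Your sketch is a reasonable outline of how the proof in \cite{Liu21} proceeds, and you correctly identify the key ingredients: the geometric comparison of Scholze's functor with the $\fm$-localized Shimura cohomology (where Condition~\ref{cond: middle deg} forces concentration in degree $n-1$ and hence degeneration), the Carayol-type typicity argument over $\mathrm{Gal}_F$ using absolute irreducibility of $\overline{\sigma}$, and the formal passage to $\fm$-torsion using freeness of $\sigma$ over $\mathbb{T}(U^{\fp})_{\fm}$.

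One small point worth flagging: the notion of ``$\overline{\sigma}|_{\mathrm{Gal}_{F^+_{\fp}}}$-typic'' in the proposition is the original one from \cite[Definition~5.2]{Sch18} (i.e.\ of the form $W\otimes\rho$ with a single $W$), which is what your argument actually produces; this is \emph{a priori} stronger than the paper's Definition~\ref{def: general typic}, which only applies once the multiplicity-free Condition~\ref{cond: mult free} is assumed and allows the multiplicity space to vary across indecomposable summands. Your derivation yields the stronger statement, which then specializes to the paper's sense when needed.
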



Now we need another condition to apply various results we need from \cite[Section~7]{Liu21}.
\begin{cond}\label{cond: main}
The dual $\pi_{\fm}^\vee = \mathrm{Hom}_{\Z_p}(\pi_{\fm}, \Q_p/\Z_p) $ is flat as a module over $\mathbb{T}(U^{\fp})_{\fm}$.
\end{cond}
Under the Condition~\ref{cond: main}, there exists for each $r \geq 1$ a short exact sequence (see \cite[Lemma~7.5]{Liu21})
\begin{equation}\label{equ: ses1}
0 \rightarrow \pi_{\fm}[\fm^r] \rightarrow \pi_{\fm}[\fm^{r+1}] \rightarrow  (\pi_{\fm}[\fm])^{\oplus s_r} \rightarrow 0
\end{equation}
where $s_r \geq 1$ is a positive integer. Applying the functor $H^{n-1}_{\text{\'et}}(\PP_{\C_p}^{n-1}, \cF_{-})$, we obtain an exact sequence on cohomology groups (for each $r \geq 1$)
\begin{equation}\label{equ: ses2}
0 \rightarrow H^{n-1}_{\text{\'et}}(\PP_{\C_p}^{n-1}, \cF_{\pi_{\fm}[\fm^r]}) \rightarrow H^{n-1}_{\text{\'et}}(\PP_{\C_p}^{n-1}, \cF_{\pi_{\fm}[\fm^{r+1}]})   \rightarrow \bigoplus_{s_r}H^{n-1}_{\text{\'et}}(\PP_{\C_p}^{n-1}, \cF_{\pi_{\fm}[\fm]}).
\end{equation}
The injectivity on the left hand side of (\ref{equ: ses2}) follows from \cite[Lemma~7.9]{Liu21}.
Now we set
$$V_r\defeq H^{n-1}_{\text{\'et}}(\PP_{\C_p}^{n-1}, \cF_{\pi_{\fm}[\fm^r]})[\fm]$$
for each $r\geq 1$ and note that $V_1=H^{n-1}_{\text{\'et}}(\PP_{\C_p}^{n-1}, \cF_{\pi_{\fm}[\fm]})$.
Taking $\fm$-torsion on the sequence \eqref{equ: ses2} yields
\begin{equation}\label{equ: ses3}
0 \rightarrow V_r \to V_{r+1} \rightarrow  (V_1)^{\oplus s_r}.
\end{equation}
To apply our results in Section~\ref{sec: submodule}, we further assume that
\begin{cond}\label{cond: mult free}
The $\F[\mathrm{Gal}_{F^+_{\fp}}]$-module $\overline{\sigma}|_{\mathrm{Gal}_{F^+_{\fp}}}$ is multiplicity free.
\end{cond}
Then we take
\begin{itemize}
    \item $R=\F$;
    \item $G=\mathrm{Gal}_{F_{\fp}^+}$;
    \item $\rho_0=\overline{\sigma}|_{\mathrm{Gal}_{F^+_{\fp}}}$; and
    \item $V=H^{n-1}_{\text{\'et}}(\PP_{\C_p}^{n-1}, \cF_{\pi_{\fm}})[\fm]$.
\end{itemize}
It follows from \cite[Lemma~7.7]{Liu21} that $V = \displaystyle{\varinjlim_r}V_r$, which together with (\ref{equ: ses3}) fulfills all the conditions of Proposition~\ref{prop: main criterion}. Therefore we deduce:
\begin{thm}\label{thm: main}
Assume that Condition~\ref{cond: middle deg}, Condition~\ref{cond: main} and Condition~\ref{cond: mult free} hold for $\fm$. Then
\begin{itemize}
\item $H^{n-1}_{\emph{\'et}}(\PP_{\C_p}^{n-1}, \cF_{\pi_{\fm}[\fm]})$ is $\overline{\sigma}|_{\mathrm{Gal}_{F^+_{\fp}}}$-typic; and in particular
\item $H^{n-1}_{\emph{\'et}}(\PP_{\C_p}^{n-1}, \cF_{\pi_{\fm}[\fm]})$ determines $\overline{\sigma}|_{\mathrm{Gal}_{F^+_{\fp}}}$ up to isomorphism.
\end{itemize}
\end{thm}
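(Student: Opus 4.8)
The plan is to recognize Theorem~\ref{thm: main} as an immediate application of the purely module-theoretic Proposition~\ref{prop: main criterion}: all the geometric and automorphic input has already been assembled in the discussion preceding the theorem, and what remains is to check, one by one, the hypotheses of that proposition for the data $R = \F$, $G = \mathrm{Gal}_{F^+_{\fp}}$, $\rho_0 = \overline{\sigma}|_{\mathrm{Gal}_{F^+_{\fp}}}$, $V = H^{n-1}_{\text{\'et}}(\PP_{\C_p}^{n-1}, \cF_{\pi_{\fm}})[\fm]$ together with the filtration $V_r = H^{n-1}_{\text{\'et}}(\PP_{\C_p}^{n-1}, \cF_{\pi_{\fm}[\fm^r]})[\fm]$ for $r \geq 1$.

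First I would verify that $\rho_0$ is a multiplicity free $\F[\mathrm{Gal}_{F^+_{\fp}}]$-module of finite length and that $V$ is $\rho_0$-typic. Finiteness of length and multiplicity freeness are Condition~\ref{cond: mult free} (together with the standard finiteness of $\overline{\sigma}$), while $\rho_0$-typicity of $V$ is exactly the second assertion of the typicity proposition above, i.e.\ \cite[Corollary~7.1]{Liu21}, which applies because Condition~\ref{cond: middle deg} holds; this in particular makes Definition~\ref{def: general typic} meaningful for $\rho_0$.

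Next I would assemble the increasing, exhaustive filtration $V_1 \subseteq V_2 \subseteq \cdots$ with the prescribed behaviour of successive quotients. By Condition~\ref{cond: main} and \cite[Lemma~7.5]{Liu21} one has the short exact sequences (\ref{equ: ses1}); applying Scholze's functor $H^{n-1}_{\text{\'et}}(\PP_{\C_p}^{n-1}, \cF_{-})$ and invoking the injectivity input \cite[Lemma~7.9]{Liu21} produces the left-exact sequences (\ref{equ: ses2}); taking $\fm$-torsion yields (\ref{equ: ses3}), so that $V_{r+1}/V_r$ embeds into $V_1^{\oplus s_r}$ with $s_r \geq 1$, which is the second bullet of Proposition~\ref{prop: main criterion}. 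That $V = \bigcup_{r \geq 1} V_r$ is \cite[Lemma~7.7]{Liu21}, the first bullet. Feeding these into Proposition~\ref{prop: main criterion} then gives that $V_1 = H^{n-1}_{\text{\'et}}(\PP_{\C_p}^{n-1}, \cF_{\pi_{\fm}[\fm]})$ is $\rho_0$-typic and that $V_1$ determines $\rho_0 = \overline{\sigma}|_{\mathrm{Gal}_{F^+_{\fp}}}$ up to isomorphism, which are precisely the two bullets of the theorem.

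I do not expect a genuine obstacle at the level of this theorem itself; the real content lies upstream. On the module-theoretic side the nontrivial ingredient is Proposition~\ref{prop: main criterion}, and inside its proof the key dichotomy (via Lemma~\ref{lem: direct summand criterion}) that every embedding $f\colon \tld{\rho} \hookrightarrow V$ of an indecomposable direct summand of $\rho_0$ has $\mathrm{im}(f)$ either contained in $V_1$ or meeting $V_1$ trivially --- this rests on Lemmas~\ref{lem: extend}, \ref{lem: irr cosocle} and \ref{lem: mult free sub} and on the indecomposability of $\tld{\rho}$, and is where the replacement of the semisimplicity hypothesis of \cite{Liu21} by mere multiplicity freeness (Condition~\ref{cond: mult free}) is actually paid for. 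On the geometric side the nontrivial input is Liu's construction of (\ref{equ: ses1})--(\ref{equ: ses2}) from flatness of $\pi_{\fm}^\vee$ and the exactness properties of Scholze's functor. The only care required in the present proof is to cite these statements in the correct form and to confirm that the arguments of Section~\ref{sec: submodule} never use anything beyond multiplicity freeness of $\rho_0$.
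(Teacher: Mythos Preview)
Your proposal is correct and matches the paper's own argument essentially verbatim: the theorem is deduced by applying Proposition~\ref{prop: main criterion} to the data $R=\F$, $G=\mathrm{Gal}_{F^+_{\fp}}$, $\rho_0=\overline{\sigma}|_{\mathrm{Gal}_{F^+_{\fp}}}$, $V=H^{n-1}_{\text{\'et}}(\PP_{\C_p}^{n-1}, \cF_{\pi_{\fm}})[\fm]$ and the filtration $V_r$, with the hypotheses supplied by \cite[Corollary~7.1, Lemmas~7.5, 7.7, 7.9]{Liu21} under Conditions~\ref{cond: middle deg}, \ref{cond: main} and \ref{cond: mult free}.
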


\begin{rmk}
As mentioned in Remark~7.6 of \cite{Liu21}, Condition~\ref{cond: main} can be reduced to a result on the Gelfand--Kirillov dimension of $\pi_{\fm}[\fm]$ using Theorem~B of \cite{GN}. Under standard Taylor--Wiles conditions and mild genericity on $\overline{\sigma}|_{\mathrm{Gal}_{F^+_{\fp}}}^{\rm{ss}}$, the Gelfand-Kirillov dimension of $\pi_{\fm}[\fm]$ is known when $n=2$ and $F^+_{\fp}$ is unramified due to \cite{BHHMS20} and \cite{HW20}. However, under the same assumption in \cite{BHHMS20} and \cite{HW20}, we already know that $\pi_{\fm}[\fm]$ determines $\overline{\sigma}|_{\mathrm{Gal}_{F^+_{\fp}}}$ thanks to \cite{BD14}, whose proof is significantly simpler than that of \cite{BHHMS20} and \cite{HW20}. One expects the determination of the Gelfand-Kirillov dimension of $\pi_{\fm}[\fm]$ for general $n$ and $F^+_{\fp}$ to be a difficult problem, and so is the flatness in Condition~\ref{cond: main}. Concerning the alternative approach generalizing \cite{BD14} (without assuming Condition~\ref{cond: main}), \cite{LLMPQ} shows that $\pi_{\fm}[\fm]$ determines $\overline{\sigma}|_{\mathrm{Gal}_{F^+_{\fp}}}$ when $F^+_{\fp}$ is unramified and $\overline{\sigma}|_{\mathrm{Gal}_{F^+_{\fp}}}$ is Fontaine--Laffaille (assuming standard Taylor--Wiles conditions and mild genericity on $\overline{\sigma}|_{\mathrm{Gal}_{F^+_{\fp}}}^{\rm{ss}}$).
\end{rmk}

\begin{rmk}\label{rmk: non flat}
We write $V_1^\star$ for the image of
\begin{equation}\label{equ: std map}
H^{n-1}_{\text{\'et}}(\PP_{\C_p}^{n-1}, \cF_{\pi_{\fm}[\fm]})\rightarrow H^{n-1}_{\text{\'et}}(\PP_{\C_p}^{n-1}, \cF_{\pi_{\fm}})[\fm]
\end{equation}
and consider the following condition
\begin{equation}\label{equ: JH factor}
\mathrm{JH}_{\F[\mathrm{Gal}_{F^+_{\fp}}]}(V_1^\star)=\mathrm{JH}_{\F[\mathrm{Gal}_{F^+_{\fp}}]}(\overline{\sigma}|_{\mathrm{Gal}_{F^+_{\fp}}}).
\end{equation}
Then we have the following observations.
\begin{enumerate}[label=(\roman*)]
\item Assuming Condition~\ref{cond: middle deg}, Condition~\ref{cond: mult free} and (\ref{equ: JH factor}),
we can deduce that $V_1^\star$ determines $\overline{\sigma}|_{\mathrm{Gal}_{F^+_{\fp}}}$ up to isomorphism from Proposition~\ref{prop: general capture}.
However, one needs to be careful that $V_1^\star$ \emph{a priori} depends on the structure of $\pi_{\fm}$ rather than $\pi_{\fm}[\fm]$, and thus the result above for $V_1^\star$ is not sufficient to imply that $\pi_{\fm}[\fm]$ determines $\overline{\sigma}|_{\mathrm{Gal}_{F^+_{\fp}}}$ up to isomorphism. Suppose that (\ref{equ: std map}) is indeed an embedding, then $V_1^\star\cong H^{n-1}_{\text{\'et}}(\PP_{\C_p}^{n-1}, \cF_{\pi_{\fm}[\fm]})$ and thus Proposition~\ref{prop: general capture} gives an alternative approach to Theorem~\ref{thm: main} without showing that $H^{n-1}_{\text{\'et}}(\PP_{\C_p}^{n-1}, \cF_{\pi_{\fm}[\fm]})$ is $\overline{\sigma}|_{\mathrm{Gal}_{F^+_{\fp}}}$-typic.
\item There are examples in \cite{HW21} (with $n=2$) such that
\begin{itemize}
\item Condition~\ref{cond: middle deg}, Condition~\ref{cond: mult free} and (\ref{equ: JH factor}) hold but Condition~\ref{cond: main} fails;
\item the map (\ref{equ: std map}) is an embedding; and
\item $V_1^\star\cong H^{n-1}_{\text{\'et}}(\PP_{\C_p}^{n-1}, \cF_{\pi_{\fm}[\fm]})$ is not $\overline{\sigma}|_{\mathrm{Gal}_{F^+_{\fp}}}$-typic.
\end{itemize}
\item When $n\geq 3$, we do not know how to prove (\ref{equ: JH factor}) or to prove that (\ref{equ: std map}) is an embedding without using Condition~\ref{cond: main}.
\end{enumerate}
\end{rmk}


\begin{rmk}
Let $\overline{\chi}_1,\overline{\chi}_2$ be two distinct characters $\mathrm{Gal}_{F^+_{\fp}}\rightarrow \F^\times$, $r_1, r_2\geq 2$ be two integers and assume that $\overline{\sigma}|_{\mathrm{Gal}_{F^+_{\fp}}}\cong \overline{\chi}_1^{\oplus r_1}\oplus\overline{\chi}_2^{\oplus r_2}$ which is not multiplicity free.
Then for each infinite dimensional $\F$-space $W$ with trivial $\mathrm{Gal}_{F^+_{\fp}}$-action, the isomorphism class of the $\F[\mathrm{Gal}_{F^+_{\fp}}]$-module $W\otimes_{\F}\overline{\sigma}|_{\mathrm{Gal}_{F^+_{\fp}}}$ does not depend on the choice of $r_1$ and $r_2$. In particular, the $\F[\mathrm{Gal}_{F^+_{\fp}}]$-module $H^{n-1}_{\text{\'et}}(\PP_{\C_p}^{n-1}, \cF_{\pi_{\fm}})[\fm]$ cannot determine $r_1$ and $r_2$.
In order to prove $\pi_{\fm}[\fm]$ determines $\overline{\sigma}|_{\mathrm{Gal}_{F^+_{\fp}}}$ for such $\overline{\sigma}|_{\mathrm{Gal}_{F^+_{\fp}}}$, we expect the $D^\times$-action on $H^{n-1}_{\text{\'et}}(\PP_{\C_p}^{n-1}, \cF_{\pi_{\fm}[\fm]})$ to be essential.
\end{rmk}

\medskip

\end{document}